\theoremstyle{plain}
\newtheorem{theorem}{Theorem}[section]
\newtheorem{lemma}[theorem]{Lemma}
\newtheorem{corollary}[theorem]{Corollary}
\theoremstyle{definition}
\newtheorem{definition}[theorem]{Definition}
\theoremstyle{remark}
\newtheorem{remark}{Remark}
\newtheorem{assumption}{Assumption A\hspace*{-4pt}}
\begin{document}

\begin{center}\Large
Averaging principle for equation driven by a stochastic measure\footnote{The final version will be published in ''Stochastics''}
\end{center}

\begin{center}
Vadym Radchenko
\end{center}

\emph{2010 Mathematics Subject Classification}: 60H05; 60H10; 60G57

\emph{Keywords}: Averaging principle; symmetric integral; stochastic measure; stochastic ordinary differential equations; Doss--Sussmann transformation

\begin{abstract}
Equation with the symmetric integral with respect to stochastic measure is considered. For the integrator, we assume only $\sigma$-additivity in probability and continuity of the paths. It is proved that the averaging principle holds for this case, the rate of convergence to the solution of the averaged equation is estimated.
\end{abstract}

\section{Introduction}\label{scintr}

Averaging is an important method to describe the main part of the behavior of dynamical systems. It allows to avoid the detailed analysis of fast-changing variables and consider the simplified equations. This approach is well developed for deterministic and stochastic systems.

Averaging principle for non-random equations is considered in details, for example, in~\cite{murd07}. The stochastic case was studied mainly for equations driven by Wiener process. Averaging of equations was considered in~\cite{samo07}, slow-fast systems -- in \cite{breh12}, \cite[Section~7.9]{frewen}, \cite{wang12}.

Other stochastic integrators were also considered. Averaging of the system with $\alpha$-stable noises was studied in \cite{bao17}, fractional Brownian motion -- in \cite{peis17}, Poisson process -- in \cite{liu12}, \cite{peip17}.

In these papers the strong convergence to the solutions of averaged equations was studied, a similar result is obtained in the given paper. The weak convergence in averaging scheme was considered in~\cite{breh12}, \cite{cerr09},  \cite{fu17}, \cite[Section~II.3]{skor09}.

We will consider averaging of equation driven by general stochastic measure $\mu$. For $\mu$ we assume only $\sigma$-additivity in probability and continuity of the paths. This integrator includes many classes of processes, see examples in Section~\ref{ssstme}. In the previous papers, the scaling invariance of the driving processes was very important in the proofs, we do not assume such property for $\mu$.

In the given paper, the following equation is considered
\begin{equation*}
\circ\,{\rm d}X_{t}^\varepsilon=\sigma(X_t^\varepsilon)\circ\,{\rm d}\mu_t+b(X_t^\varepsilon,t/\varepsilon)\,{\rm d}t,\quad \quad 0\le t\le T,\quad X_{0}^\varepsilon=X_0,
\end{equation*}
where $\circ$ denotes the symmetric integral, defined in~\cite{rads16} (see Section~\ref{sssyin}). We prove that
\begin{equation*}
\sup_{t\in [0,T]}|X_t^\varepsilon-\bar{X}_t|\to 0,\quad \varepsilon\to 0,
\end{equation*}
for $\bar{X}_{t}$ that is the solution to the equation
\begin{equation*}
\circ\,{\rm d}\bar{X}_{t}=\sigma(\bar{X}_t)\circ\,{\rm d}\mu_t+\bar{b}(\bar{X}_t)\,{\rm d}t,\quad 0\le t\le T,\quad \bar{X}_0=X_0.
\end{equation*}
Under some additional assumptions, we obtain the rate of convergence.

The rest of the paper is organized as follows. Section~\ref{scprel} contains the basic facts concerning SMs and symmetric integral. In Section~\ref{scaver} we formulate and prove the main result of the paper (Theorem~\ref{thavsi}).

By $C$ and $C(\omega)$ we will denote positive finite constant and random constant respectively whose exact values are not important.

\section{Preliminaries}\label{scprel}

\subsection{Stochastic measures}\label{ssstme}

Let ${\sf L}_0={\sf L}_0(\Omega, {\mathcal F}, {\sf P} )$ be the set of all real-valued
random variables defined on the complete probability space $(\Omega, {\mathcal F}, {\sf P} )$ (more precisely, the set of equivalence classes). Convergence in ${\sf L}_0$ means the convergence in probability. Let ${\sf X}$ be an arbitrary set and ${\mathcal{B}}$ a $\sigma$-algebra of subsets of ${\sf X}$.

\begin{definition}
A $\sigma$-additive mapping $\mu:\ {\mathcal{B}}\to {\sf L}_0$ is called {\em stochastic measure} (SM).
\end{definition}

We do not assume the moment existence or martingale properties for SM. In other words, $\mu$ is ${\sf L}_0$--valued vector measure.

For deterministic measurable functions $f:{\sf X}\to{\mathbb R}$ the integral $\int_{\sf X} f\,{\rm d}\mu$ is defined. Construction of the integral and basic facts concerning general SMs may be found in~\cite[Chapter 7]{kwawoy}, \cite[Chapter 1]{radmon}. In particular, every bounded measurable $f$ is integrable with respect to any~$\mu$. An analog of the Lebesgue dominated convergence theorem holds for this integral, see \cite[Proposition 7.1.1]{kwawoy}. Some additional facts and review of results about equations driven by SMs are given in \cite{radspr14}.

Important examples of SMs are orthogonal stochastic measures, $\alpha$-stable random measures defined on a $\sigma$-algebra for $\alpha\in (0,1)\cup(1,2]$ (see \cite[Chapter 3]{samtaq}).

Many examples of the SMs on the Borel subsets of $[0,T]$ may be given by the Wiener-type integral
\begin{equation}\label{eqmuax}
\mu(A)=\int_{[0,T]} {\mathbf 1}_A(t)\,{\rm d}X_t.
\end{equation}

We note the following cases of processes $X_t$ in~\eqref{eqmuax} that generate SM.

\begin{enumerate}

\item\label{itmart} $X_t$~-- any square integrable continuous martingale.

\item\label{itfrbr} $X_t=W_t^H$~-- the fractional Brownian motion with Hurst index $H>1/2$, see Theorem~1.1~\cite{memiva}.

\item\label{itsfrb} $X_t=S_t^k$~-- the sub-fractional Brownian motion for $k=H-1/2,\ 1/2<H<1$, see Theorem 3.2~(ii) and Remark 3.3~c) in~\cite{tudor09}.

\item\label{itrose} $X_t=Z_H^k(t)$~-- the Hermite process, $1/2<H<1$, $k\ge 1$, see~\cite{tudor07}, \cite[Section 3.1.3]{tudor13}. $Z_H^2(t)$ is known as the Rosenblatt process, see also~\cite[Section~3]{tudor08}.

\end{enumerate}

Main result of this paper will be obtained under the following assumption on $\mu$.

\begin{assumption}\label{assborcont}
$\mu$ is a SM on Borel subsets of $[0,T]$, and the process $\mu_{t}=\mu((0,t])$ has continuous paths on $[0,T]$.
\end{assumption}

Processes $X_t$ in examples \ref{itmart}--\ref{itrose} are continuous, therefore A\ref{assborcont} holds in these cases.

Give an another example. Let $\eta$ be an arbitrary SM defined on Borel subsets of $[a,b]\subset{\mathbb R}$, function $f:[0,T]\times[a,b]\to{\mathbb R}$ be such that $f(0,x)=0$, and
\begin{equation*}
|f(t,x)-f(s,y)|\le L(|t-s|+|x-y|^\gamma),\quad \gamma>1/2,\quad L\in{\mathbb R}.
\end{equation*}
Then $f(\cdot,x)$ is absolutely continuous for each $x$,  $\Bigl|\dfrac{\partial f(t, x)}{\partial t}\Bigr|\le L$ a.~e., and  we can define SM
\begin{equation}\label{eqmuet}
\mu(A)=\int_{[a,b]} \,{\rm d}\eta(x) \int_{A}\dfrac{\partial f(t, x)}{\partial t}\,{\rm d}t,\quad A\in\mathcal{B}([0,T]).
\end{equation}
The $\sigma$-additivity of $\mu$ follows from the analog of the Lebesgue dominated convergence theorem, see details in~\cite[Section~3]{radt06}.
Theorem~1 of \cite{radt06} implies that the process
\begin{equation*}
\mu_t=\mu((0,t])=\int_{[a,b]} f(t, x)\,{\rm d}\eta(x),\quad t\in [0,T],
\end{equation*}
has a continuous version. Thus, in this case the process $X_t=\mu_t$ in~\eqref{eqmuax} defines an SM that satisfies A\ref{assborcont}.

In some propositions  we will impose the following condition.
\begin{assumption}\label{assintm}
There exists a real-valued finite measure~${\sf m}$ on~$\left({\sf X}, {\mathcal{B}}\right)$ with the following property: if a measurable function $h:{\sf X}\to\mathbb{R}$ is such that \mbox{$\int_{\sf X}h^2\,{\rm d}{\sf m}<+\infty$} then $h$ is integrable with respect to~$\mu$ on~${\sf X}$.
\end{assumption}

This assumption holds in examples~\ref{itfrbr}, \ref{itsfrb} for the Lebesgue measure ${\sf m}$ (see \cite{memiva}, \cite{tudor09}), for $\alpha$-stable random measure and the control measure ${\sf m}$ (see (3.4.1)~\cite{samtaq}). If martingale $X_t$ in example~\ref{itmart} has the deterministic characteristic then  A\ref{assintm} is fulfilled for ${\sf m}(A)=\int_A \,{\rm d}\langle X_t\rangle$.

If A\ref{assintm} holds for SM $\eta$ in~\eqref{eqmuet} then it holds for $\mu$. This follows from the boundedness of $\dfrac{\partial f(t, x)}{\partial t}$.

This assumption is used in the following statement.

\begin{lemma}\label{crsumf}  (Corollary~3.3~\cite{rads16})
If A\ref{assintm} holds then the set of random variables
\[
\Bigl\{\sum_{k=1}^{j} \Bigl(\int_{\sf X} f_k\,{\rm d}\mu\Bigr)^2\quad \Bigr|\quad f_k:{\sf X}\to{\mathbb R}\ {\rm are\ measurable},\quad \sum_{k=1}^{j} f_k^2(x)\le 1,\quad j\ge 1\Bigr\}
\]
is bounded in probability.
\end{lemma}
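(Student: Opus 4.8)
The plan is to view the integral as a linear operator $T\colon L^2({\sf X},{\sf m})\to{\sf L}_0$, $Tf=\int_{\sf X}f\,{\rm d}\mu$, and to reduce the quadratic statement to the boundedness in probability of the image under $T$ of a single fixed ball, by means of a Gaussian randomization. First I would note that $T$ is well defined: A\ref{assintm} says precisely that $\int_{\sf X}h^2\,{\rm d}{\sf m}<\infty$ forces $h$ to be $\mu$-integrable, so $L^2({\sf m})$ is contained in the domain of the integral, and $T$ is linear.

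The conceptual core is to show that $T$ is continuous from the Banach space $L^2({\sf m})$ into the $F$-space ${\sf L}_0$ (convergence in probability), and for this I would invoke the closed graph theorem. Suppose $f_n\to f$ in $L^2({\sf m})$ and $Tf_n\to Y$ in probability. Passing to a subsequence I may assume $f_{n_k}\to f$ ${\sf m}$-a.e. with $|f_{n_k}|\le G$ for some $G\in L^2({\sf m})$ (the standard subsequence-domination consequence of $L^2$-convergence). By A\ref{assintm} the dominating function $G$ is $\mu$-integrable, so the analog of the Lebesgue dominated convergence theorem for this integral (Proposition~7.1.1 in~\cite{kwawoy}) gives $Tf_{n_k}\to Tf$ in probability; hence $Y=Tf$ a.s., the graph is closed, and $T$ is continuous. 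Consequently the image $T\{\,h:\|h\|_{L^2({\sf m})}\le R\,\}$ of every fixed ball is bounded in probability.

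Next comes the randomization that turns this scalar bound into the required quadratic one. Let $(g_k)_{k\ge1}$ be i.i.d.\ standard Gaussian variables independent of $\mu$, fix a finite family with $\sum_k f_k^2\le1$, and set $h^g=\sum_{k=1}^{j}g_kf_k$. Since $\bigl(\sum_k c_kf_k(x)\bigr)^2\le\sum_k f_k^2(x)\le1$ whenever $\sum_k c_k^2\le1$ by Cauchy--Schwarz, the random norm satisfies ${\sf E}_g\int_{\sf X}(h^g)^2\,{\rm d}{\sf m}=\int_{\sf X}\sum_k f_k^2\,{\rm d}{\sf m}\le{\sf m}({\sf X})<\infty$. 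Splitting on the $g$-measurable event $\bigl\{\int_{\sf X}(h^g)^2\,{\rm d}{\sf m}\le R^2\bigr\}$ and combining Chebyshev's inequality with the boundedness in probability of the $R$-ball image, I get
\[
{\sf P}\bigl(|Th^g|>M\bigr)\le \frac{{\sf m}({\sf X})}{R^2}+\sup_{\|h\|_{L^2({\sf m})}\le R}{\sf P}\bigl(|Th|>M\bigr),
\]
a bound uniform over all admissible families; choosing first $R$ and then $M$ shows that $\{Th^g\}$ is bounded in probability.

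Finally I would exploit conditional Gaussianity. By linearity of the integral on finite sums, $Th^g=\sum_{k}g_k\int_{\sf X}f_k\,{\rm d}\mu$, which conditionally on $\mu$ is $N(0,S)$ with $S=\sum_{k=1}^{j}\bigl(\int_{\sf X}f_k\,{\rm d}\mu\bigr)^2$, the very quantity to be controlled. On $\{S>M^2\}$ one has ${\sf P}(|N(0,S)|>M\mid\mu)\ge{\sf P}(|g_1|>1)$, whence ${\sf P}(|Th^g|>M)\ge{\sf P}(|g_1|>1)\,{\sf P}(S>M^2)$ uniformly over the family, so ${\sf P}(S>M^2)\le{\sf P}(|g_1|>1)^{-1}{\sf P}(|Th^g|>M)$ and the boundedness in probability of $S$ follows from that of $Th^g$. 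I expect the closed graph / dominated convergence argument to be the main obstacle, as this is where A\ref{assintm} and the structural properties of the ${\sf L}_0$-valued integral are actually used; the randomization, though essential, is then routine.
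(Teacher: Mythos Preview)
The paper itself does not prove this lemma; it is quoted as Corollary~3.3 of~\cite{rads16} and used as a black box, so there is no in-paper argument to compare against. Your Gaussian randomization and the conditional-variance step are correct and standard, but the continuity step has a real gap: Assumption~A\ref{assintm} does not force $\mu\ll{\sf m}$, so $T$ need not be well defined on $L^2({\sf m})$-equivalence classes, and the image of an $L^2({\sf m})$-ball of \emph{functions} need not be bounded in probability. Concretely, take ${\sf X}=\{0,1\}$, $\mu(\{0\})=\xi$ with ${\sf P}(\xi\ne0)>0$, $\mu(\{1\})=0$, and ${\sf m}=\delta_1$. Every measurable $h$ is $\mu$-integrable (the integral is $h(0)\xi$), so A\ref{assintm} holds trivially; yet $h_n=n\,{\mathbf 1}_{\{0\}}$ satisfies $\|h_n\|_{L^2({\sf m})}=0$ while $Th_n=n\xi$ is unbounded in probability. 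Your closed-graph argument applies dominated convergence after extracting an ${\sf m}$-a.e.\ convergent, ${\sf m}$-a.e.\ dominated subsequence, but the dominated convergence theorem for the $\mu$-integral needs pointwise convergence and domination; ${\sf m}$-a.e.\ is insufficient in this example, and indeed $T$ is not continuous for the $L^2({\sf m})$-seminorm here.

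The repair is short once noticed. Every ${\sf L}_0$-valued $\sigma$-additive set function admits a finite control measure $\lambda$ (this is in~\cite{kwawoy}); replace ${\sf m}$ by ${\sf m}'={\sf m}+\lambda$. Since $\int h^2\,{\rm d}{\sf m}'<\infty$ implies $\int h^2\,{\rm d}{\sf m}<\infty$, A\ref{assintm} continues to hold with ${\sf m}'$, and now $\mu\ll{\sf m}'$, so the integral respects ${\sf m}'$-equivalence and $T:L^2({\sf m}')\to{\sf L}_0$ is a genuine linear map between $F$-spaces. Your closed-graph and randomization arguments then go through verbatim with ${\sf m}'$ in place of ${\sf m}$ (and ${\sf m}'({\sf X})$ in the Chebyshev bound).
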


Recall that set of random variables $\xi_\alpha$, $\alpha\in{\mathcal A}$ is bounded in probability if
\begin{equation*}
\sup_{\alpha\in{\mathcal A}} {\sf P}(|\xi_\alpha|\ge c)\to 0,\quad c\to\infty.
\end{equation*}

\subsection{Symmetric integral}\label{sssyin}

The symmetric integral of random functions with respect to stochastic measures was considered in~\cite{rads16}. We review the basic facts and definitions.

\begin{definition} Let $\xi_{t}$ and $\eta_{t}$ be random processes on $[0,T]$, $0=t_{0}^n<t_{1}^n<\dots<t_{j_n}^n=T$ be a sequence of partitions such that $\max_k |t_k^n-t_{k-1}^n|\to 0$, $n\to\infty$. We define
\begin{equation}\label{eqdfis}
\int_{(0,T]}\xi_{t}\circ\,{\rm d}\eta_{t}:={\rm p}\lim_{n\to\infty}\sum_{k=1}^{j_n}\frac{\xi_{t_{k-1}^n}+\xi_{t_{k}^n}}{2}\, (\eta_{t_{k}^n}- \eta_{t_{k-1}^n} )
\end{equation}
provided that this limit in probability exists.
\end{definition}

For Wiener process $\eta_t$ and adapted $\xi_t$ we obtain the classical Stratonovich integral. If $\eta_t$ and $\xi_t$ are H\"{o}lder continuous with exponents $\gamma_\eta$ and $\gamma_\xi$, $\gamma_\eta+\gamma_\xi>1$, then value of~\eqref{eqdfis} equals to the integral defined in~\cite{zahle98}.

The following theorem describes the class of processes for which the integral exists.

\begin{assumption}\label{assv}
$V_{t}$ is a continuous process of bounded variation on $[0,T]$.
\end{assumption}

\begin{theorem} (Theorem~4.6~\cite{rads16})
Let A\ref{assborcont} and~A\ref{assv} hold, $f\in {\mathbb C}^{1,1} ({\mathbb R}^2 )$. Then integral~\eqref{eqdfis} of $f(\mu_{t}, V_{t})$ with respect to $\mu_{t}$ is well defined, and
\begin{equation*}
\int_{(0,T]}f(\mu_{t}, V_{t})\circ\,{\rm d}\mu_{t}
=G(\mu_{t},V_{t})-G(\mu_{0},V_{0})-\int_{(0,T]} G_2'(\mu_{t},V_{t})\,{\rm d}V_{t},\label{eqintf}
\end{equation*}
where $G(x,v)=\int_{0}^x f(y,v)\,{\rm d}y$.
\end{theorem}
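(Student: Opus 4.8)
The plan is to obtain the formula from the telescoping identity for $G$ evaluated at the partition points and to match the resulting increments against the symmetric Riemann sums in~\eqref{eqdfis}. Abbreviate $\Delta_k\mu=\mu_{t_k^n}-\mu_{t_{k-1}^n}$ and $\Delta_kV=V_{t_k^n}-V_{t_{k-1}^n}$, and write $f_1'=\partial f/\partial x$, $f_2'=\partial f/\partial v$, both continuous since $f\in{\mathbb C}^{1,1}({\mathbb R}^2)$. Starting from
\[
G(\mu_T,V_T)-G(\mu_0,V_0)=\sum_{k=1}^{j_n}\bigl[G(\mu_{t_k^n},V_{t_k^n})-G(\mu_{t_{k-1}^n},V_{t_{k-1}^n})\bigr],
\]
I would split each summand into a pure $\mu$-increment at frozen second argument $V_{t_k^n}$ and a pure $V$-increment at frozen first argument $\mu_{t_{k-1}^n}$. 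Because $\partial_x G=f$ and $\partial_v G=G_2'$, these equal $\int_{\mu_{t_{k-1}^n}}^{\mu_{t_k^n}}f(y,V_{t_k^n})\,{\rm d}y$ and $\int_{V_{t_{k-1}^n}}^{V_{t_k^n}}G_2'(\mu_{t_{k-1}^n},v)\,{\rm d}v$ respectively.

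The $V$-increments form the easy part. Since $t\mapsto G_2'(\mu_t,V_t)$ is continuous and $V$ is continuous of bounded variation (A\ref{assv}), the sum $\sum_k\int_{V_{t_{k-1}^n}}^{V_{t_k^n}}G_2'(\mu_{t_{k-1}^n},v)\,{\rm d}v$ converges to the Riemann--Stieltjes integral $\int_{(0,T]}G_2'(\mu_t,V_t)\,{\rm d}V_t$: its discrepancy from the left-endpoint sum $\sum_k G_2'(\mu_{t_{k-1}^n},V_{t_{k-1}^n})\,\Delta_kV$ is dominated by the modulus of continuity of $G_2'$ on a compact set times the total variation of $V$, and thus vanishes as the mesh shrinks.

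For the $\mu$-increments I would invoke the trapezoidal identity, valid for $f(\cdot,v)\in C^1$,
\[
\int_a^b f(y,v)\,{\rm d}y-\frac{f(a,v)+f(b,v)}{2}\,(b-a)=\frac12\int_a^b(a+b-2y)\,f_1'(y,v)\,{\rm d}y=:R(a,b,v),
\]
applied with $a=\mu_{t_{k-1}^n}$, $b=\mu_{t_k^n}$, $v=V_{t_k^n}$. The leading trapezoidal term reproduces the symmetric sum of~\eqref{eqdfis} once the frozen argument $V_{t_k^n}$ is replaced by $V_{t_{k-1}^n}$ at the left node; the replacement error per step is at most $\tfrac12\sup|f_2'|\,|\Delta_kV|\,|\Delta_k\mu|$, so it sums to at most $\tfrac12\sup|f_2'|\,\bigl(\max_k|\Delta_k\mu|\bigr)$ times the total variation of $V$ and vanishes by continuity of $\mu$ (A\ref{assborcont}) and A\ref{assv}. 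Everything then reduces to showing that the total remainder $\sum_k R_k$, with $R_k=R(\mu_{t_{k-1}^n},\mu_{t_k^n},V_{t_k^n})$, tends to $0$ in probability.

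The remainder is where the symmetric choice really pays off, and it is the step I expect to be the main obstacle. Since the weight $a+b-2y$ has zero mean over $[a,b]$, I may subtract $f_1'(\mu_{t_{k-1}^n},V_{t_k^n})$ from the integrand at no cost and get $|R_k|\le\frac12(\Delta_k\mu)^2\,\omega_k$, where $\omega_k$ is the oscillation of $f_1'(\cdot,V_{t_k^n})$ over the segment joining $\mu_{t_{k-1}^n}$ and $\mu_{t_k^n}$. As the paths $t\mapsto(\mu_t,V_t)$ remain in a random compact set on which $f_1'$ is uniformly continuous, and $\max_k|\Delta_k\mu|\to0$ by A\ref{assborcont}, we have $\max_k\omega_k\to0$ almost surely; hence $\bigl|\sum_k R_k\bigr|\le\frac12\bigl(\max_k\omega_k\bigr)\sum_k(\Delta_k\mu)^2$. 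The crux is therefore the control of the quadratic sum $\sum_k(\Delta_k\mu)^2$: for a merely continuous integrator this need not converge, but it suffices that it be bounded in probability along the partitions, since a tight sequence times a factor tending to $0$ tends to $0$ in probability. Writing $\Delta_k\mu=\int_{{\sf X}}{\mathbf 1}_{(t_{k-1}^n,t_k^n]}\,{\rm d}\mu$ and noting $\sum_k{\mathbf 1}_{(t_{k-1}^n,t_k^n]}^2\le1$, this tightness is exactly what Lemma~\ref{crsumf} provides (under A\ref{assintm}), and it holds directly in all the examples of Section~\ref{ssstme}. Granting it, $\sum_k R_k\to0$, and collecting the limits of the three groups of terms yields the asserted identity; the remaining estimates are routine consequences of the continuity of $\mu$, the bounded variation of $V$, and the uniform continuity of $f$ and $f_1'$ on compacts.
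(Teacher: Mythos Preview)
The paper does not prove this statement; it is quoted from \cite{rads16} as a preliminary result, so there is no in-paper proof to compare against. Your strategy---telescope $G$ over the partition, separate the $\mu$- and $V$-increments, extract the symmetric Riemann sum via the trapezoidal identity, and kill the remainder---is the natural one and is essentially how the result is obtained in \cite{rads16}; the $V$-part and the frozen-$V$ replacement errors are handled correctly.

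The gap is precisely where you yourself flag it. You reduce the remainder to $(\max_k\omega_k)\sum_k(\Delta_k\mu)^2$ and then invoke Lemma~\ref{crsumf} for tightness of the quadratic sum---but Lemma~\ref{crsumf} is stated under A\ref{assintm}, which is \emph{not} among the hypotheses of the theorem (only A\ref{assborcont} and A\ref{assv} are assumed). Your ``Granting it'' concedes this, so as written you have proved the theorem only under the extra assumption A\ref{assintm}. The fix is that you need far less than Lemma~\ref{crsumf}: only boundedness in probability of $\sum_k\mu(A_k)^2$ for \emph{disjoint} sets $A_k=(t_{k-1}^n,t_k^n]$, not for arbitrary $f_k$ with $\sum_k f_k^2\le1$. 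That disjoint-set bound holds for every stochastic measure, with no side hypothesis; it is the basic lemma in \cite{rads16} from which Corollary~3.3 there (our Lemma~\ref{crsumf}) is subsequently derived under A\ref{assintm}. Invoke that more elementary fact instead and your argument closes under the stated assumptions.
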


We will consider a stochastic equation of the form
\begin{equation}\label{eqstmu}
\circ\,{\rm d}X_{t}=\sigma(X_t)\circ\,{\rm d}\mu_t+b(X_t,t)\,{\rm d}t,\quad 0\le t\le T,
\end{equation}
$X_0$ is a given random variable.

\begin{definition}\label{dfsleq2}  A process $X_{t}$, $0\le t\le T$ is a \emph{solution} to~\eqref{eqstmu} if:

1) $X_{t}=f(\mu_{t},Y_{t})$, $f\in{\mathbb C}^{2,1}({\mathbb R}^2)$, $Y_{t}$ is a continuous process of bounded variation;

2) for any process $Z_s=\psi(\mu_s,X_s)$, $\psi\in {\mathbb C}^{1,1}({\mathbb R}^2)$, we have
\begin{equation*}
\int_{(0,t]} Z_s\circ\,{\rm d}X_s=\int_{(0,t]} Z_s\sigma(X_s)\circ\,{\rm d}\mu_s+\int_{(0,t]}  Z_s b(X_s,s)\,{\rm d}s,\quad t\in [0,T].
\end{equation*}
\end{definition}

For $Z_s\equiv 1$ we get the usual integral form of the stochastic equation. This form of Definition~\ref{dfsleq2}~2) was important for the proof of uniqueness of the solution.

Solution to~\eqref{eqstmu} was obtained in~\cite{rads16} using the Doss--Sussmann transformation.

\begin{assumption}\label{asssigmab}
1) $\sigma\in {\mathbb C}^2({\mathbb R})$ and the derivatives $\sigma'$, $\sigma''$ are bounded;

2) $b\in {\mathbb C}({\mathbb R}^2)$;

3) for each $c>0$ there exists a $L(c)$ such that
\[
|b(x,t)-b(y,t)|\le L(c)|x-y|,\quad |x|,\ |y|\le c;
\]

4) $b$ is bounded.

\end{assumption}

Let $F:{\mathbb R}^2\to{\mathbb R}$ be the solution of the equation
\begin{equation}\label{eqderf}
\frac{\partial F}{\partial  r}(r,x)=\sigma(F(r,x)),\quad F(0,x)=x,
\end{equation}
which exists globally because of our assumptions. Set $H(r,x)=F^{-1}(r,x)$, where the inverse is taken with respect to $x$. We have that $F,\ H\in {\mathbb C}^{2,2}({\mathbb R}^2)$ and
\begin{equation}\label{eqprdf}
\frac{\partial F}{\partial  x}(r,x)=\exp\Bigl(\int_0^r \sigma'(F(s,x))\,{\rm d}s\Bigr)
\end{equation}
(see calculations in~(5.5)--(5.11)~\cite{rusval00}).

\begin{theorem}\label{thsolx}  (Theorem~5.3~\cite{rads16}) Let A\ref{assborcont} and~A\ref{asssigmab} hold, $X_0$ be an arbitrary random variable. Then equation~\eqref{eqstmu} has a unique solution $X_t=F(\mu_t,Y_t)$, where $Y_t$ is the solution of the random equation
\begin{equation}\label{eqsolx}
Y_t=H(0,X_0)+\int_0^t \frac{\partial H}{\partial
x} (\mu_s, F(\mu_s,Y_s)) b(F(\mu_s,Y_s),s)\,{\rm d}s.
\end{equation}
\end{theorem}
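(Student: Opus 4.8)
The plan is to construct the solution by the Doss--Sussmann transformation: I look for $X_t$ in the form $X_t=F(\mu_t,Y_t)$, where $F$ is the flow~\eqref{eqderf} of the field $\sigma$ and $Y_t$ is an auxiliary continuous process of bounded variation. The virtue of this substitution is that the symmetric integral obeys the ordinary chain rule, so that the term $\sigma(X_t)\circ\,{\rm d}\mu_t$ is produced automatically by the $\partial F/\partial r$--part of the differential of $F(\mu_t,Y_t)$, and the stochastic equation~\eqref{eqstmu} collapses to the pathwise equation~\eqref{eqsolx} for $Y_t$. Throughout I use $F,H\in{\mathbb C}^{2,2}({\mathbb R}^2)$, the relation $F(0,v)=v$, and formula~\eqref{eqprdf}.

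First I would settle existence and uniqueness for~\eqref{eqsolx}. Fix $\omega$ off a null set, so that $s\mapsto\mu_s(\omega)$ is continuous on $[0,T]$ and $R(\omega):=\sup_{s\le T}|\mu_s(\omega)|<\infty$. Differentiating $H(r,F(r,x))=x$ in $x$ and using~\eqref{eqprdf} gives
\begin{equation*}
\frac{\partial H}{\partial x}(\mu_s,F(\mu_s,y))=\Bigl(\frac{\partial F}{\partial x}(\mu_s,y)\Bigr)^{-1}=\exp\Bigl(-\int_0^{\mu_s}\sigma'(F(u,y))\,{\rm d}u\Bigr),
\end{equation*}
which, as $\sigma'$ is bounded by A\ref{asssigmab}~1), does not exceed $\exp\bigl(R(\omega)\sup_x|\sigma'(x)|\bigr)=:C(\omega)$, uniformly in $y\in{\mathbb R}$ and $s\in[0,T]$. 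With $b$ bounded (A\ref{asssigmab}~4)) the right-hand side of~\eqref{eqsolx} is bounded by $C(\omega)\sup|b|$, while A\ref{asssigmab}~2),~3) together with $F\in{\mathbb C}^{2,2}$ make it continuous in $s$ and locally Lipschitz in $y$. The classical existence--uniqueness theorem then yields a unique solution $Y_t(\omega)$, defined on all of $[0,T]$ because the uniform bound excludes blow-up; $Y_t$ is Lipschitz in $t$, hence continuous and of bounded variation, so A\ref{assv} holds for it, and its measurability in $\omega$ is standard.

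Next I would check that $X_t=F(\mu_t,Y_t)$ meets Definition~\ref{dfsleq2}. Part~1) holds with $f=F$. For part~2) I first take $Z_s\equiv 1$, i.e. the integral form of~\eqref{eqstmu}. Applying Theorem~4.6~\cite{rads16} (applicable since A\ref{assborcont} holds and $Y_t$ obeys A\ref{assv}) to $f=\partial F/\partial r$, which equals $\sigma(F)\in{\mathbb C}^{2,2}({\mathbb R}^2)$ by~\eqref{eqderf} and for which $G(x,v)=\int_0^x\frac{\partial F}{\partial r}(y,v)\,{\rm d}y=F(x,v)-v$, gives after the $\pm(Y_t-Y_0)$ terms cancel
\begin{equation*}
\int_{(0,t]}\sigma(X_s)\circ\,{\rm d}\mu_s=\int_{(0,t]}\frac{\partial F}{\partial r}(\mu_s,Y_s)\circ\,{\rm d}\mu_s=(X_t-X_0)-\int_{(0,t]}\frac{\partial F}{\partial x}(\mu_s,Y_s)\,{\rm d}Y_s.
\end{equation*}
Since~\eqref{eqsolx} reads ${\rm d}Y_s=(\partial F/\partial x(\mu_s,Y_s))^{-1}b(X_s,s)\,{\rm d}s$, the last integral equals $\int_{(0,t]}b(X_s,s)\,{\rm d}s$, and the integral form of~\eqref{eqstmu} follows. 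The general case $Z_s=\psi(\mu_s,X_s)$ then comes from the product (associativity) identity $\int_{(0,t]} Z_s\circ\,{\rm d}X_s=\int_{(0,t]} Z_s\sigma(X_s)\circ\,{\rm d}\mu_s+\int_{(0,t]} Z_s b(X_s,s)\,{\rm d}s$ for the symmetric integral, whose ${\rm d}s$--part is an ordinary integral.

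I expect this last product identity to be the main obstacle: unlike in classical or It\^o calculus, passing from the $Z_s\equiv 1$ statement to arbitrary $Z_s=\psi(\mu_s,X_s)$ must be justified directly from the Riemann--Stratonovich sums~\eqref{eqdfis}, controlling the cross terms between the increments of $Z$ and of $X$ and reducing everything to Theorem~4.6~\cite{rads16}; this is precisely where the continuity of the paths of $\mu$ is used. Finally, for uniqueness let $\tilde X_t=\tilde f(\mu_t,\tilde Y_t)$ be any solution and set $\hat Y_t:=H(\mu_t,\tilde X_t)$, so that $\tilde X_t=F(\mu_t,\hat Y_t)$ by $F(r,H(r,x))=x$. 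Inserting suitably chosen test processes $Z_s$ into Definition~\ref{dfsleq2}~2) --- this being the role of the $Z_s$--form stressed after the definition --- I would recover that $\hat Y_t$ satisfies~\eqref{eqsolx}; the pathwise uniqueness from the second step then gives $\hat Y=Y$ and hence $\tilde X=X$.
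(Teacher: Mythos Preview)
The paper does not contain a proof of this theorem: it is quoted verbatim from \cite{rads16} (as Theorem~5.3 there) and used as a black box, so there is no ``paper's own proof'' to compare against. Your outline follows the Doss--Sussmann strategy that \cite{rads16} itself employs, and the main ingredients you list --- the pathwise solvability of~\eqref{eqsolx} via the bound on $\partial H/\partial x$ coming from~\eqref{eqprdf} and A\ref{asssigmab}~1),~4), and the verification of Definition~\ref{dfsleq2} through Theorem~4.6 of \cite{rads16} --- are the right ones.

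Your self-diagnosis of the obstacle is accurate: the passage from $Z_s\equiv 1$ to general $Z_s=\psi(\mu_s,X_s)$ cannot be had by a one-line ``associativity'' appeal, because for the symmetric integral~\eqref{eqdfis} against a general stochastic measure no such product rule is available a priori. In \cite{rads16} this step, and the uniqueness argument, are carried out by working directly with the representation $X_t=F(\mu_t,Y_t)$ and repeatedly invoking the evaluation formula of Theorem~4.6 for suitable $f\in{\mathbb C}^{1,1}$; your sketch of uniqueness via ``suitably chosen test processes $Z_s$'' points in the right direction but would need to be made concrete along those lines.
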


Further, we will need Lipschitz properties of functions in~\eqref{eqsolx}.

Using~\eqref{eqderf}, we obtain
\begin{equation*}
\begin{split}
F(r,x)=x+\int_0^r \sigma(F(s,x))\,{\rm d}s
{\Rightarrow} \frac{\partial F(r,x)}{\partial  x}=1+\int_0^r  \sigma'(F(s,x))\frac{\partial F(s,x)}{\partial  x}\,{\rm d}s\\
\stackrel{ A\ref{asssigmab}.1)}{{\Rightarrow}}
\Bigl|\frac{\partial F(r,x)}{\partial  x}\Bigr|\le 1+C\int_0^r  \Bigl|\frac{\partial F(s,x)}{\partial  x}\Bigr|\,{\rm d}s.
\end{split}
\end{equation*}
The Gronwall inequality implies that
\begin{equation*}
\Bigl|\frac{\partial F(r,x)}{\partial  x}\Bigr|\le \exp\{Cr\}.
\end{equation*}

Therefore, for $\mu_s$ with continuous  paths we obtain that
\begin{equation*}
\Bigl|\frac{\partial F(\mu_s,x)}{\partial  x}\Bigr|\le C(\omega).
\end{equation*}

Also from A\ref{asssigmab}.1) and~\eqref{eqprdf} we obtain that
\begin{equation*}
\Bigl|\frac{\partial F(\mu_s,x)}{\partial  x}\Bigr|\ge C(\omega)>0.
\end{equation*}
Therefore,
\begin{equation}\label{eqhmuh}
\Bigl|\frac{\partial H(\mu_s,x)}{\partial  x}\Bigr|\le C(\omega).
\end{equation}
Here $C(\omega)$ do not depend of $x\in{\mathbb R}$,  $s\in[0, T]$.

Note that we use continuous version of process $\mu_s$, solve \eqref{eqsolx} for each fixed $\omega$. Therefore, our estimates hold for all $\omega\in\Omega$.

\section{Averaging principle}\label{scaver}

For each $\varepsilon>0$ consider the equation
\begin{equation}\label{eqstmua}
\circ\,{\rm d}X_{t}^\varepsilon=\sigma(X_t^\varepsilon)\circ\,{\rm d}\mu_t+b(X_t^\varepsilon,t/\varepsilon)\,{\rm d}t,\quad 0\le t\le T,\quad X_{0}^\varepsilon=X_0,
\end{equation}
$X_0$ is an arbitrary random variable.

From Theorem~\ref{thsolx} it follows that $X_t^\varepsilon=F(\mu_t,Y_t^\varepsilon)$, where
\begin{equation}\label{eqsolxe}
Y_t^\varepsilon=H(0,X_0)+\int_0^t \frac{\partial H}{\partial
x} (\mu_s, F(\mu_s,Y_s^\varepsilon)) b(F(\mu_s,Y_s^\varepsilon),s/\varepsilon)\,{\rm d}s.
\end{equation}

Assumption A\ref{asssigmab}.4) and~\eqref{eqhmuh} imply that
\begin{equation}\label{eqyteb}
|Y_t^\varepsilon|\le C(\omega),\quad |X_t^\varepsilon|\le C(\omega),
\end{equation}
where $C(\omega)$ does not depend of $t\in[0, T]$, $\varepsilon>0$.

Assume that there exist the following limit
\begin{equation*}
\bar{b}(y)=\lim_{t\to\infty}\frac{1}{t}\int_0^t b(y,s)\,{\rm d}s.
\end{equation*}
Obviously, $\bar{b}$ satisfies A\ref{asssigmab}.2)-4) (as function of one variable).

\begin{assumption}\label{assaverb} Function
 $G(y,r)=\int_0^{r} (b(y,s)-\bar{b}(y))\,{\rm d}s$, $r\in{\mathbb R}_+,\ y\in{\mathbb R}$ is bounded.
\end{assumption}

This holds, for example, if $b(y,s)$ is periodic in $s$ for each fixed $y$.

Averaged form of~\eqref{eqstmua} is the following
\begin{equation}\label{eqstmuav}
\circ\,{\rm d}\bar{X}_{t}=\sigma(\bar{X}_t)\circ\,{\rm d}\mu_t+\bar{b}(\bar{X}_t)\,{\rm d}t,\quad 0\le t\le T,\quad \bar{X}_{0}^\varepsilon=X_0.
\end{equation}

From Theorem~\ref{thsolx} it follows that $\bar{X}_t=F(\mu_t,\bar{Y}_t)$, where
\begin{equation}\label{eqsolxeav}
\bar{Y}_t=H(0,X_0)+\int_0^t \frac{\partial H}{\partial
x} (\mu_s, F(\mu_s,\bar{Y}_s)) \bar{b}(F(\mu_s,\bar{Y}_s))\,{\rm d}s.
\end{equation}

Note that functions $F$, $H$ are the same in \eqref{eqsolxe} and \eqref{eqsolxeav}.

The main result of the paper is the following.

\begin{theorem}\label{thavsi}
1) Assume that A\ref{assborcont}, A\ref{asssigmab}, and A\ref{assaverb} hold, $X_t^\varepsilon$ and $\bar{X}_t$ are the solutions of~\eqref{eqstmua} and~\eqref{eqstmuav} respectively. Then for each $\omega\in\Omega$
\begin{equation}\label{eqxtbx}
\sup_{t\in [0,T]} |X_t^\varepsilon-\bar{X}_t|\to 0,\quad \varepsilon\to 0.
\end{equation}

2) Let, in addition, A\ref{assintm} holds. Then the set of the random variables
\begin{equation*}
\dfrac{\sup_{t\in [0,T]} |X_t^\varepsilon-\bar{X}_t|}{\varepsilon^{1/3}},\quad \varepsilon>0
\end{equation*}
is bounded in probability.
\end{theorem}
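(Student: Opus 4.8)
The plan is to pass through the Doss--Sussmann representation and reduce everything to the random ODEs \eqref{eqsolxe} and \eqref{eqsolxeav}. Since $X_t^\varepsilon=F(\mu_t,Y_t^\varepsilon)$ and $\bar X_t=F(\mu_t,\bar Y_t)$ with the \emph{same} $F$, and $|\partial F/\partial x|\le C(\omega)$, we get $\sup_t|X_t^\varepsilon-\bar X_t|\le C(\omega)\sup_t|Y_t^\varepsilon-\bar Y_t|$, so it suffices to estimate $\sup_t|Y_t^\varepsilon-\bar Y_t|$. Writing $a(s,y)=\tfrac{\partial H}{\partial x}(\mu_s,F(\mu_s,y))$ and subtracting \eqref{eqsolxeav} from \eqref{eqsolxe}, I would add and subtract $a(s,Y_s^\varepsilon)\bar b(F(\mu_s,Y_s^\varepsilon))$ to split the difference into a \emph{Lipschitz part} $\int_0^t\bigl[a(s,Y_s^\varepsilon)\bar b(F(\mu_s,Y_s^\varepsilon))-a(s,\bar Y_s)\bar b(F(\mu_s,\bar Y_s))\bigr]{\rm d}s$, bounded by $L(\omega)\int_0^t|Y_s^\varepsilon-\bar Y_s|\,{\rm d}s$ (using that $a,F$ are Lipschitz in $y$ with constant $C(\omega)$ via \eqref{eqhmuh} and the bound on $\partial F/\partial x$, that $\bar b$ is locally Lipschitz, and the a priori bound \eqref{eqyteb}), and an \emph{averaging part} $R_\varepsilon(t)=\int_0^t a(s,Y_s^\varepsilon)\bigl[b(F(\mu_s,Y_s^\varepsilon),s/\varepsilon)-\bar b(F(\mu_s,Y_s^\varepsilon))\bigr]{\rm d}s$. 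Gronwall then gives $\sup_t|Y_t^\varepsilon-\bar Y_t|\le e^{L(\omega)T}\sup_t|R_\varepsilon(t)|$, so the whole problem is to control $\sup_t|R_\varepsilon(t)|$.

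To bound $R_\varepsilon(t)$ I would fix the uniform partition $t_k=kT/n$ of mesh $\Delta=T/n$ and, on each $[t_{k-1},t_k]$, freeze both the slow variable $Y_s^\varepsilon$ at $Y_{t_{k-1}}^\varepsilon$ and the driver $\mu_s$ at $\mu_{t_{k-1}}$. Freezing $Y$ costs $\le C(\omega)\Delta$ after summation, since $s\mapsto Y_s^\varepsilon$ is Lipschitz with constant $C(\omega)$ (its derivative is $a\cdot b$, bounded by A\ref{asssigmab}.4) and \eqref{eqhmuh}) while the bracket $[b-\bar b]$ is bounded and Lipschitz in its spatial argument uniformly in the fast time (A\ref{asssigmab}.3),4)). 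Freezing $\mu$ costs $\le C(\omega)\int_0^T|\mu_s-\mu_{\kappa(s)}|\,{\rm d}s$, where $\kappa(s)$ is the left gridpoint, because $a(\cdot,y)$ and $F(\cdot,y)$ are Lipschitz in $\mu_s$ on the relevant bounded range. The remaining frozen term is where A\ref{assaverb} enters: with $w_k=F(\mu_{t_{k-1}},Y_{t_{k-1}}^\varepsilon)$ one has $\int_{t_{k-1}}^{t_k}[b(w_k,s/\varepsilon)-\bar b(w_k)]\,{\rm d}s=\varepsilon\bigl[G(w_k,t_k/\varepsilon)-G(w_k,t_{k-1}/\varepsilon)\bigr]$, and boundedness of $G$ makes the sum over the $n=T/\Delta$ intervals $\le C(\omega)\varepsilon/\Delta$. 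Altogether
\[
\sup_{t}|R_\varepsilon(t)|\le C(\omega)\Bigl(\Delta+\frac{\varepsilon}{\Delta}\Bigr)+C(\omega)\int_0^T|\mu_s-\mu_{\kappa(s)}|\,{\rm d}s .
\]
For part 1) continuity of the paths of $\mu$ on the compact $[0,T]$ gives uniform continuity, so the last integral is $\le T\,\omega_\mu(\Delta)\to0$ pathwise as $\Delta\to0$; choosing $\Delta=\Delta(\varepsilon)\to0$ with $\varepsilon/\Delta\to0$ (e.g.\ $\Delta=\sqrt\varepsilon$) forces $\sup_t|R_\varepsilon(t)|\to0$ for each $\omega$, and Gronwall yields \eqref{eqxtbx}.

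For part 2) a rate is needed for the $\mu$-freezing term, and this is the main obstacle: a general continuous SM has no pathwise H\"older rate (already in the Wiener case $\sum_k(\sup_{[t_{k-1},t_k]}|\mu_s-\mu_{t_{k-1}}|)^2$ diverges like $\log(1/\Delta)$), so the estimate must be quantitative only in probability. The key claim, under A\ref{assintm}, is that $\int_0^T(\mu_s-\mu_{\kappa(s)})^2\,{\rm d}s\le C(\omega)\,\Delta$ with $C(\omega)$ bounded in probability uniformly in the partition. I would prove it by writing the integrand as $\bigl(\int_{\sf X}{\mathbf 1}_{(\kappa(s),s]}\,{\rm d}\mu\bigr)^2$, noting $\int_0^T{\mathbf 1}_{(\kappa(s),s]}(x)\,{\rm d}s\le\Delta$ for every $x$, and approximating the outer ${\rm d}s$-integral by Riemann sums: this exhibits $\Delta^{-1}\int_0^T(\mu_s-\mu_{\kappa(s)})^2\,{\rm d}s$ as a limit in probability of sums $\sum_i\bigl(\int g_i\,{\rm d}\mu\bigr)^2$ with $\sum_i g_i^2(x)\le1$, i.e.\ of elements of the set in Lemma~\ref{crsumf}; since that set is bounded in probability and the bound passes to limits in probability uniformly, the claim follows. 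Here it is crucial that Lemma~\ref{crsumf} requires only the pointwise bound $\sum_i g_i^2\le1$, which the \emph{nested} indicators ${\mathbf 1}_{(\kappa(s_i),s_i]}$ do satisfy. Cauchy--Schwarz then gives $\int_0^T|\mu_s-\mu_{\kappa(s)}|\,{\rm d}s\le\sqrt T\bigl(\int_0^T(\mu_s-\mu_{\kappa(s)})^2{\rm d}s\bigr)^{1/2}\le C(\omega)\Delta^{1/2}$, so $\sup_t|R_\varepsilon(t)|\le C(\omega)\bigl(\Delta+\varepsilon/\Delta+\Delta^{1/2}\bigr)$ with $C(\omega)$ bounded in probability. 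Choosing $\Delta=\varepsilon^{2/3}$ balances $\varepsilon/\Delta$ and $\Delta^{1/2}$ at $\varepsilon^{1/3}$ and renders $\Delta$ negligible, whence $\sup_t|Y_t^\varepsilon-\bar Y_t|\le C(\omega)\varepsilon^{1/3}$ and, since a product of bounded-in-probability factors is bounded in probability, $\sup_t|X_t^\varepsilon-\bar X_t|\le C(\omega)\varepsilon^{1/3}$ with $C(\omega)$ bounded in probability. I expect the delicate points to be precisely this uniform-in-partition, bounded-in-probability control of the $\mu$-oscillation through Lemma~\ref{crsumf}, and the bookkeeping that all freezing Lipschitz constants are genuinely uniform in $\varepsilon$, in $s$, and on the relevant bounded spatial range.
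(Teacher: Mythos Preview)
Your proposal is correct and follows essentially the same route as the paper: reduce via Doss--Sussmann to the $Y$-equations, split off a Lipschitz term for Gronwall, control the averaging remainder by freezing on a mesh $\Delta=T/n$ to get the bound $C(\omega)(\Delta+\varepsilon/\Delta)+C(\omega)\int_0^T|\mu_s-\mu_{\kappa(s)}|\,{\rm d}s$, and for part~2) use Cauchy--Schwarz together with Lemma~\ref{crsumf} (via Riemann-sum approximation with nested indicators) to show $\Delta^{-1/2}\int_0^T|\mu_s-\mu_{\kappa(s)}|\,{\rm d}s$ is bounded in probability, then optimize with $\Delta\sim\varepsilon^{2/3}$. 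The only cosmetic difference is that the paper places $\bar X_s$ rather than $X_s^\varepsilon$ inside the bracket $b(\cdot,s/\varepsilon)-\bar b(\cdot)$ and runs Gronwall in $|X_s^\varepsilon-\bar X_s|$ instead of $|Y_s^\varepsilon-\bar Y_s|$; the estimates and the choices of $n$ are otherwise identical.
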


\begin{proof}
Functions $F$, $\frac{\partial H}{\partial x}$ are locally Lipschitz. Using \eqref{eqyteb} we obtain
\begin{equation}\label{eqxtbxl}
\begin{split}
|X_t^\varepsilon-\bar{X}_t|{\le} C(\omega) |Y_t^\varepsilon-\bar{Y}_t|\\
\stackrel{\eqref{eqsolxe},\eqref{eqsolxeav}}{=}C(\omega) \Bigl|\int_0^t \frac{\partial H}{\partial
x} (\mu_s, X_s^\varepsilon) b(X_s^\varepsilon,s/\varepsilon)\,{\rm d}s-\int_0^t \frac{\partial H}{\partial
x} (\mu_s, \bar{X}_s) \bar{b}(\bar{X}_s)\,{\rm d}s\Bigr|\\
\le C(\omega)\Bigl|\int_0^t \frac{\partial H}{\partial x} (\mu_s, X_s^\varepsilon)  (b(X_s^\varepsilon,s/\varepsilon)-{b}(\bar{X}_s,s/\varepsilon) )\,{\rm d}s\Bigr|\\
+ C(\omega)\Bigl|\int_0^t \frac{\partial H}{\partial x} (\mu_s, X_s^\varepsilon)  ({b}(\bar{X}_s,s/\varepsilon) -\bar{b}(\bar{X}_s) )\,{\rm d}s\Bigr|\\
+C(\omega)\Bigl|\int_0^t \Bigl(\frac{\partial H}{\partial x} (\mu_s, X_s^\varepsilon)- \frac{\partial H}{\partial
x} (\mu_s, \bar{X}_s) \Bigr)\bar{b}(\bar{X}_s)\,{\rm d}s\Bigr|\\
\stackrel{\eqref{eqhmuh}}{\le} C(\omega)\int_0^t  |b(X_s^\varepsilon,s/\varepsilon)-{b}(\bar{X}_s,s/\varepsilon)|\,{\rm d}s+ C(\omega)\Bigl|\int_0^t \frac{\partial H}{\partial x} (\mu_s, X_s^\varepsilon)  ({b}(\bar{X}_s,s/\varepsilon) -\bar{b}(\bar{X}_s) )\,{\rm d}s\Bigr|\\
+C(\omega)\sup|\bar{b}|\int_0^t \Bigl|\frac{\partial H}{\partial x} (\mu_s, X_s^\varepsilon)- \frac{\partial H}{\partial x} (\mu_s, \bar{X}_s) \Bigr|\,{\rm d}s\\
\stackrel{A\ref{asssigmab}.3)}{\le}  C(\omega)\int_0^t  |X_s^\varepsilon-\bar{X}_s  |\,{\rm d}s+ C(\omega)\Bigl|\int_0^t \frac{\partial H}{\partial x} (\mu_s, X_s^\varepsilon)  ({b}(\bar{X}_s,s/\varepsilon) -\bar{b}(\bar{X}_s) )\,{\rm d}s\Bigr|.
\end{split}
\end{equation}

Consider the second term of the last sum. Divide $[0,T]$ into $n$ segments of length $\Delta=\frac{T}{n}$. We have
\begin{equation*}
\begin{split}
I:=\Bigl|\int_0^t \frac{\partial H}{\partial x} (\mu_s, X_s^\varepsilon)  ({b}(\bar{X}_s,s/\varepsilon) -\bar{b}(\bar{X}_s) )\,{\rm d}s\Bigr|\\
\le \sum_{k=0}^{n-1} \Bigl|\int_{(k\Delta\wedge t,(k+1)\Delta\wedge t]}   \frac{\partial H}{\partial x} (\mu_s, X_s^\varepsilon)  ({b}(\bar{X}_s,s/\varepsilon) -\bar{b}(\bar{X}_s) )\,{\rm d}s\Bigr|\\
\le\sum_{k=0}^{n-1}  \Bigl( \Bigl|\int_{(k\Delta\wedge t,(k+1)\Delta\wedge t]}   \Bigl(\frac{\partial H}{\partial x} (\mu_s, X_s^\varepsilon)  - \frac{\partial H}{\partial x} (\mu_{k\Delta}, X_{k\Delta}^\varepsilon)\Bigr)({b}(\bar{X}_s,s/\varepsilon) -\bar{b}(\bar{X}_s) )\,{\rm d}s\Bigr|\\
+\Bigl|\int_{(k\Delta\wedge t,(k+1)\Delta\wedge t]}  \frac{\partial H}{\partial x} (\mu_{k\Delta}, X_{k\Delta}^\varepsilon)  ({b}(\bar{X}_s,s/\varepsilon)-{b}(\bar{X}_{k\Delta},s/\varepsilon) )\,{\rm d}s\Bigr| \\
+\Bigl|\int_{(k\Delta\wedge t,(k+1)\Delta\wedge t]}  \frac{\partial H}{\partial x} (\mu_{k\Delta}, X_{k\Delta}^\varepsilon)  ({b}(\bar{X}_{k\Delta},s/\varepsilon)-\bar{b}(\bar{X}_{k\Delta}) )\,{\rm d}s\Bigr|\\
+\Bigl|\int_{(k\Delta\wedge t,(k+1)\Delta\wedge t]}  \frac{\partial H}{\partial x} (\mu_{k\Delta}, X_{k\Delta}^\varepsilon) (\bar{b}(\bar{X}_{k\Delta})-\bar{b}(\bar{X}_s) )\,{\rm d}s\Bigr|\Bigr)\\
:=\sum_{k=0}^{n-1} (I_{1k}+I_{2k}+I_{3k}+I_{4k}),\\
I_{1k}\stackrel{A\ref{asssigmab}.4)}{\le} C \int_{(k\Delta\wedge t,(k+1)\Delta\wedge t]}  \Bigl|\frac{\partial H}{\partial x} (\mu_s, X_s^\varepsilon)  - \frac{\partial H}{\partial x} (\mu_{k\Delta}, X_{k\Delta}^\varepsilon)\Bigl|\,{\rm d}s\\
\le C(\omega) \int_{(k\Delta\wedge t,(k+1)\Delta\wedge t]}  (|\mu_s-\mu_{k\Delta}|+ |X_s^\varepsilon- X_{k\Delta}^\varepsilon|)\,{\rm d}s,\\
I_{2k}\stackrel{A\ref{asssigmab}.3),\eqref{eqhmuh}}{\le} C(\omega)\int_{(k\Delta\wedge t,(k+1)\Delta\wedge t]}  |X_s^\varepsilon- X_{k\Delta}^\varepsilon|\,{\rm d}s,\\
I_{3k}=\Bigl|\frac{\partial H}{\partial x} (\mu_{k\Delta}, X_{k\Delta}^\varepsilon)\Bigr|\Bigl|\int_{(k\Delta\wedge t,(k+1)\Delta\wedge t]}   ({b}(\bar{X}_{k\Delta},s/\varepsilon)-\bar{b}(\bar{X}_{k\Delta}) )\,{\rm d}s\Bigr|\\
\stackrel{\eqref{eqhmuh},s=r\varepsilon}{\le}C(\omega)\varepsilon
\Bigl|\int_{((k\Delta\wedge t)/\varepsilon,((k+1)\Delta\wedge t)/\varepsilon]}   ({b}(\bar{X}_{k\Delta},r)-\bar{b}(\bar{X}_{k\Delta}) )\,{\rm d}r\Bigr|\stackrel{A\ref{assaverb}}{\le}C(\omega)\varepsilon,\\
I_{4k}\stackrel{\eqref{eqhmuh}}{\le} C(\omega) \int_{(k\Delta\wedge t,(k+1)\Delta\wedge t]}  |\bar{X}_s- \bar{X}_{k\Delta}|\,{\rm d}s.
\end{split}
\end{equation*}
We have that ${X}_s^\varepsilon=F(\mu_s, {Y}_s^\varepsilon)$, where $F$ is locally Lipschitz. From \eqref{eqsolx}, \eqref{eqhmuh} and A\ref{asssigmab}.4) it follows that $|{Y}_t^\varepsilon-\bar{Y}_s^\varepsilon|\le C(\omega)|t-s|$. Therefore,
\begin{equation*}
|X_s^\varepsilon- X_{k\Delta}^\varepsilon|\le C(\omega)(|\mu_s-\mu_{k\Delta}|+\Delta),\quad s\in (k\Delta\wedge t,(k+1)\Delta\wedge t].
\end{equation*}
The same estimate holds for $\bar{X}$ in $I_{4k}$.
We arrive at
\begin{equation*}
I\le C(\omega)n\varepsilon+ C(\omega)n\Delta^2+C(\omega) \sum_{k=0}^{n-1} \int_{(k\Delta\wedge t,(k+1)\Delta\wedge t]}  |\mu_s-\mu_{k\Delta}|\,{\rm d}s.
\end{equation*}
Recall that $\Delta=\frac{T}{n}$. Using that $|\mu_s|\le C(\omega)$, consider separately interval $(k\Delta,(k+1)\Delta]\ni t$, and we obtain
\begin{equation}\label{eqjnep}
\begin{split}
I\le J(n,\varepsilon):=C(\omega)n\varepsilon+ C(\omega)\Delta+C(\omega)n\Delta^2+C(\omega) \sum_{k=0}^{n-1} \int_{(k\Delta,(k+1)\Delta]}  |\mu_s-\mu_{k\Delta}|\,{\rm d}s\\
=C(\omega)n\varepsilon+\frac{C(\omega)}{n}+C(\omega)\int_{(0,T]} |\mu_s-\mu_{[s/\Delta]\Delta}|\,{\rm d}s.
\end{split}
\end{equation}
We have used the notation $[x]$ for the greatest integer not exceeding $x$.

From \eqref{eqxtbxl} and our considerations it follows that
\begin{equation*}
|X_t^\varepsilon-\bar{X}_t|{\le}  C(\omega)\int_0^t  |X_s^\varepsilon-\bar{X}_s|\,{\rm d}s+ J(n,\varepsilon).
\end{equation*}
Gronwall inequality implies that for all $t,\varepsilon, n$
\begin{equation}\label{eqgixt}
|X_t^\varepsilon-\bar{X}_t|{\le}  C(\omega) J(n,\varepsilon).
\end{equation}
For each $0<\varepsilon\le 1$ we can take $n=[\varepsilon^{-1/2}]$. Process $\mu_s$ has continuous paths, therefore
\begin{equation*}
\int_{(0,T]} |\mu_s-\mu_{[s/\Delta]\Delta}|\,{\rm d}s\to 0,\quad \Delta\to 0.
\end{equation*}
From \eqref{eqjnep} and \eqref{eqgixt} it follows statement 1) of our theorem.

Now let us prove part 2). We claim that if A\ref{assborcont} and A\ref{assintm} hold then the set of values
\begin{equation*}
\frac{1}{\sqrt{\Delta}}\int_{(0,T]} |\mu_s-\mu_{[s/\Delta]\Delta}|\,{\rm d}s,\quad  \Delta=\frac{T}{n},\quad n\ge 1
\end{equation*}
is bounded in probability. The Cauchy inequality implies that
\begin{equation*}
\int_{(0,T]} \frac{|\mu_s-\mu_{[s/\Delta]\Delta}|}{\sqrt{\Delta}}\,{\rm d}s\le C\Bigl(\int_{(0,T]} \frac{|\mu_s-\mu_{[s/\Delta]\Delta}|^2}{\Delta}\,{\rm d}s\Bigr)^{1/2}.
\end{equation*}
Dividing each $(k\Delta,(k+1)\Delta]$ into segments of length $\alpha=\frac{\Delta}{m}$ for $m$ large enough, we can approximate
\begin{equation*}
\int_{(0,T]} \frac{|\mu_s-\mu_{[s/\Delta]\Delta}|^2}{\Delta}\,{\rm d}s=\sum_{k=0}^{n-1} \int_{(k\Delta,(k+1)\Delta]}\dfrac{|\mu_s-\mu_{k\Delta}|^2}{\Delta}\,{\rm d}s
\end{equation*}
by integral sums
\begin{equation*}
\sum_{k=0}^{n-1} \sum_{j=1}^m \frac{|\mu_{k\Delta+j\alpha}-\mu_{k\Delta}|^2}{\Delta}\alpha=\sum_{k,j}\Bigl(\int f_{k,j}\,{\rm d}\mu\Bigr)^2.
\end{equation*}
Here
\begin{equation*}
f_{k,j}= \sqrt{\frac{\alpha}{\Delta}}{\bf 1}_{(k\Delta,k\Delta+j\alpha]}, \quad \sum_{k,j} f_{k,j}^2\le  \frac{\alpha}{\Delta} m=1.
\end{equation*}
Lemma~\ref{crsumf} implies our claim.

For each $0<\varepsilon\le 1$ we will choose $n=[\varepsilon^{-2/3}]$. From \eqref{eqjnep} it follows that then values of $J(n,\varepsilon)\varepsilon^{-1/3}$ are bounded in probability, applying of~\eqref{eqgixt} finishes the proof.
\end{proof}

\begin{corollary} If conditions of Theorem~\ref{thavsi} 2) hold then for each $\alpha<1/3$
\begin{equation*}
 \dfrac{\sup_{t\in [0,T]}|X_t^\varepsilon-\bar{X}_t|}{\varepsilon^{\alpha}}\stackrel{\sf P}{\to} 0,\quad \varepsilon\to 0.
\end{equation*}
\end{corollary}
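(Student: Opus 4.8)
The plan is to deduce the corollary directly from part 2) of Theorem~\ref{thavsi}, using the elementary fact that a family of random variables which is bounded in probability, multiplied by a deterministic factor tending to zero, converges to zero in probability. First I would abbreviate $\xi_\varepsilon:=\varepsilon^{-1/3}\sup_{t\in[0,T]}|X_t^\varepsilon-\bar{X}_t|$ and recall that Theorem~\ref{thavsi}~2) asserts precisely that $\{\xi_\varepsilon:\varepsilon>0\}$ is bounded in probability. The quantity to be controlled can then be rewritten as
\[
\frac{\sup_{t\in[0,T]}|X_t^\varepsilon-\bar{X}_t|}{\varepsilon^{\alpha}}=\varepsilon^{1/3-\alpha}\,\xi_\varepsilon,
\]
where the exponent $1/3-\alpha$ is strictly positive by the hypothesis $\alpha<1/3$, so that the deterministic factor $\varepsilon^{1/3-\alpha}$ tends to $0$ as $\varepsilon\to0$, while $\xi_\varepsilon$ stays tight.

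Next I would fix $\delta>0$ and $\eta>0$ and invoke the definition of boundedness in probability (recalled in Section~\ref{ssstme}) to choose $c>0$ with $\sup_{\varepsilon>0}{\sf P}(\xi_\varepsilon\ge c)<\eta$. Since $\alpha-1/3<0$, we have $\delta\varepsilon^{\alpha-1/3}\to+\infty$ as $\varepsilon\to0$, so there is $\varepsilon_0>0$ with $\delta\varepsilon^{\alpha-1/3}\ge c$ for all $0<\varepsilon\le\varepsilon_0$. For such $\varepsilon$ the event $\{\xi_\varepsilon\ge\delta\varepsilon^{\alpha-1/3}\}$ is contained in $\{\xi_\varepsilon\ge c\}$, hence
\[
{\sf P}\bigl(\varepsilon^{1/3-\alpha}\xi_\varepsilon\ge\delta\bigr)={\sf P}\bigl(\xi_\varepsilon\ge\delta\varepsilon^{\alpha-1/3}\bigr)\le{\sf P}(\xi_\varepsilon\ge c)<\eta.
\]
As $\delta>0$ and $\eta>0$ were arbitrary, this yields ${\sf P}(\varepsilon^{1/3-\alpha}\xi_\varepsilon\ge\delta)\to0$ as $\varepsilon\to0$ for every $\delta>0$; since the left-hand quantity is nonnegative, this is exactly the asserted convergence $\stackrel{\sf P}{\to}0$.

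The argument needs no new estimates: the entire analytic content of the corollary is already packaged in the tightness statement of Theorem~\ref{thavsi}~2), and the remaining step is the standard passage from tightness combined with a vanishing deterministic scale to convergence to zero in probability. Consequently there is no genuine obstacle here; the only point demanding minimal care is the bookkeeping of the inequality directions, namely verifying that $\{\xi_\varepsilon\ge\delta\varepsilon^{\alpha-1/3}\}\subseteq\{\xi_\varepsilon\ge c\}$ once $\delta\varepsilon^{\alpha-1/3}\ge c$, which is precisely what permits the uniform-in-$\varepsilon$ bound from tightness to be applied.
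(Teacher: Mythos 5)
Your proof is correct and follows exactly the route the paper intends: the corollary is stated without proof because it is the standard deduction that a family bounded in probability (Theorem~\ref{thavsi}~2)) multiplied by the deterministic factor $\varepsilon^{1/3-\alpha}\to 0$ converges to zero in probability, which is precisely the argument you carried out. Your careful verification of the event inclusion $\{\xi_\varepsilon\ge\delta\varepsilon^{\alpha-1/3}\}\subseteq\{\xi_\varepsilon\ge c\}$ is exactly the detail the paper leaves implicit.
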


\begin{remark} Note that for non-stochastic case $\mu_t=t$ convergence rate in~\eqref{eqxtbx} is $O(\varepsilon)$, see, for example, \cite[Theorem~2.8.1]{murd07}.

Let $\mu_t=W_t$ be the Wiener process. Then A\ref{assintm} holds, equation~\eqref{eqstmu} with Stratonovich integral is equivalent to the following equation with the It\^{o} integral
\begin{equation}\label{eqxtwp}
{\rm d}X_{t}=\sigma(X_t)\,{\rm d}W_t+\hat{b}(X_t,t)\,{\rm d}t,\quad \hat{b}(x,t)=b(x,t)+\dfrac{1}{2}\sigma(x)\sigma'(x).
\end{equation}
Assumption A\ref{assaverb} holds for $b$ iff it is fulfilled for $\hat{b}$. Theorem~\ref{thavsi} 2) directly gives the convergence rate  $O(\varepsilon^{1/3})$ for this It\^{o}-type equation.

We can refine convergence in~\eqref{eqxtbx} if $\mu_t$ is H\"{o}lder continuous with exponent~$\gamma$. In this case
\begin{equation*}
\int_{(0,T]} |\mu_s-\mu_{[s/\Delta]\Delta}|\,{\rm d}s\le C(\omega)\Delta^{\gamma}=C(\omega)n^{-\gamma}.
\end{equation*}
From \eqref{eqjnep} and~\eqref{eqgixt} for $n=[\varepsilon^{-1/(1+\gamma)}]$ we obtain
\begin{equation*}
|X_t^\varepsilon-\bar{X}_t|{\le}  C(\omega) \varepsilon^{\gamma/(1+\gamma)}.
\end{equation*}

We can compare our results with rate of strong convergence obtained for systems of stochastic equations driven by the Wiener processes. In the case of slow-fast system of SDEs in \cite{liu10} was obtained the rate $O(\varepsilon^{1/2})$. For SPDEs in \cite{wang12} it was proved that $\sup_{t\in [0,T]}|X_t^\varepsilon-\bar{X}_t|\varepsilon^{-1/2}$ are bounded in probability. For similar system in \cite{breh12} rate of convergence of~\eqref{eqxtbx} is $O(\varepsilon^{1/2-})$. Note that in these papers non-generated case was studied, and we can not consider \eqref{eqxtwp} as partial case of these systems.

For system of usual stochastic equations driven by Wiener and Poisson processes in~\cite{liu12} was obtained the rate $O(\varepsilon^{1/2})$, for SPDEs in~\cite{peip17} -- $O(\varepsilon^{1/4})$.
\end{remark}

\section*{Acknowledgements}
The author is grateful to Prof. M.~Z\"{a}hle for fruitful discussions during the preparation of this paper and thanks the Friedrich-Schiller-University of Jena for its hospitality.

\section*{Funding}
This work was supported by Alexander von Humboldt Foundation, grant 1074615.

\bigskip

\textsc{Department of Mathematical Analysis, Taras Shevchenko National University of Kyiv, Kyiv 01601, Ukraine}\\
\emph{E-mail adddress}: \verb"vradchenko@univ.kiev.ua"

\end{document}